\definecolor{halfgray}{gray}{0.55} 
\definecolor{webgreen}{rgb}{0,0.5,0}
\definecolor{webbrown}{rgb}{.6,0,0} \hypersetup{%
\newtheorem{theorem}{Theorem}[section]
\newtheorem{lemma}[theorem]{Lemma}
\def\P{{\mathbb{P}}}
\def\d{{\mathsf{d}}}
\begin{document}

\title[Continuity of Oseledets subspaces for fiber-bunched cocycles]{A note on the continuity of Oseledets subspaces for fiber-bunched cocycles}

\author{Lucas Backes }

\address{\noindent IME - Universidade do Estado do Rio de Janeiro, Rua S\~ao Francisco Xavier 524, CEP 20550-900, Rio de
  Janeiro, RJ, Brazil . 
\newline e-mail: \rm
  \texttt{lhbackes@impa.br} }

\maketitle

\begin{abstract}
We prove that restricted to the subset of fiber-bunched elements of the space of $GL(2,\mathbb{R})$-valued cocycles Oseledets subspaces vary continuously, in measure, with respect to the cocycle. 
\end{abstract}

\section{Introduction}

In its simple form, a linear cocycle is just an invertible dynamical system $f:M \rightarrow M$ and a matrix-valued map $A:M\rightarrow GL(d, \mathbb{R})$. Sometimes one calls linear cocycle (over $f$ generated by $A$), instead, the sequence $\lbrace A^n\rbrace _{n\in \mathbb{Z}}$ defined by
\begin{equation*}\label{def:cocycles}
A^n(x)=
\left\{
	\begin{array}{ll}
		A(f^{n-1}(x))\ldots A(f(x))A(x)  & \mbox{if } n>0 \\
		Id & \mbox{if } n=0 \\
		A(f^{n}(x))^{-1}\ldots A(f^{-1}(x))^{-1}& \mbox{if } n<0 \\
	\end{array}
\right.
\end{equation*}
for all $x\in M$.

A special class of cocycles is given when the base dynamics $f$ is hyperbolic and the dynamics induced by $A$ on the projective space is dominated by the dynamics of $f$. That is, the rates of contraction and expansion of the cocycle $A$ along an orbit are smaller than the rates of contraction and expansion of $f$. 
Such a cocycle is called \textit{fiber-bunched} (see Section \ref{sec: definitions and statements} for the precise definitions).

Many aspects of fiber-bunched cocycles are rather well understood. For instance, it is known that their cohomology classes are completely characterized by the information on periodic points \cite{Ba, Sa}, generically they have simple Lyapunov spectrum \cite{BV, VianaAlmostAllCocyc} and in the case when $d=2$, Lyapunov exponents are continuous as functions of the coycle \cite{BBB}. In this short note, still in the context of fiber-bunched cocycles, we address the problem of continuity of the Oseledets subspaces. More precisely, we prove that restricted to the subset of fiber-bunched elements of the space of $GL(2,\mathbb{R})$-valued cocycles Oseledets subspaces vary continuously, in measure, with respect to the cocycle. The proof of this result relies on ideas from \cite{BBB} and \cite{BocV}. In a different context a similar statement was recently gotten by \cite{DK}.

\section{Definitions and statements}\label{sec: definitions and statements}

Let $(M,\d)$ be a compact metric space and $f: M \to M $ be a
homeomorphism. Given any $x\in M$ and $\varepsilon >0$, we define the
\emph{local stable} and \emph{unstable sets} of $x$ with respect to $f$ by
\begin{align*}
  W^s_{\epsilon} (x) &:= \left\{y\in M : \d(f^n(x),f^n(y))\leq\epsilon ,\ \forall
    n \geq 0\right\}, \\
  W^u_{\epsilon } (x) &:= \left\{y\in M : \d(f^n(x),f^n(y))\leq\epsilon ,\ \forall
    n \leq 0\right\},
\end{align*}
respectively.

Following \cite{AvilaVianaExtLyapInvPrin}, we say that a homeomorphism $f:M\to M$ is \emph{hyperbolic with
    local product structure} (or just \emph{hyperbolic} for short)
  whenever there exist constants $C_1,\epsilon ,\tau>0$ and $\lambda\in
  (0,1)$ such that the following conditions are satisfied:
  
  \begin{itemize}
  \item $\; \d(f^n(y_1),f^n(y_2)) \leq C_1\lambda^n \d(y_1,y_2)$,
    $\forall x\in M$, $\forall y_1,y_2 \in W^s_{\epsilon } (x)$, $\forall
    n\geq 0$;
    
  \item $\; \d(f^{-n}(y_1), f^{-n}(y_2)) \leq C_1\lambda^n
    \d(y_1,y_2)$, $\forall x\in M$, $\forall y_1,y_2 \in W^u_{\epsilon } (x)$,
    $\forall n\geq 0$;
    
  \item If $\d(x,y)\leq\tau$, then $W^s_{\epsilon }(x)$ and
    $W^u_{\epsilon }(y)$ intersect in a unique point which is denoted by
    $[x,y]$ and depends continuously on $x$ and $y$. This property is called \textit{local product structure}.
  \end{itemize}
  
Fix such an hyperbolic homeomorphism and let $A:M \rightarrow GL(d,\mathbb{R})$ be a $r$-H\"older continuous map. This means that there exists $C_2>0$ such that 
\begin{displaymath}
\Vert A(x)-A(y)\Vert \leq C_2 \d(x,y)^r \; \textrm{for any} \; x, y\in M. 
\end{displaymath}
Let us denote by $H^r(M)$ the space of such $r$-H\"older continuous maps. We endow this space with the $r$-H\"older topology which is generated by norm
\begin{displaymath}
\parallel A \parallel _{r}:= \sup _{x\in M} \parallel A(x)\parallel + \sup _{x\neq y} \dfrac{\parallel A(x)-A(y)\parallel}{d(x,y)^r}.
\end{displaymath}

We say that the cocycle generated by $A$ satisfies the \textit{fiber bunching condition} or that the cocycle is \textit{fiber-bunched} if there exists $C_3>0$ and $\theta <1$ such that 
\begin{displaymath}
\Vert A^n(x)\Vert \Vert A^n(x)^{-1}\Vert \lambda ^{nr}\leq C_3 \theta ^n 
\end{displaymath}
for every $x\in M$ and $n\geq 0$ where $\lambda$ is the constant given in the definition of hyperbolic homeomorphism.

Let $\mu$ be an ergodic $f$-invariant probability measure on $M$ with local product structure. Roughly speaking, the last property means that $\mu$ is locally equivalent to the product measure $\mu ^s \times \mu ^u$ where $\mu ^s $ and $\mu ^u$ are measures on the local stable and unstable manifolds respectively induced by $\mu$ via the local product structure of $f$. Since we are not going to use explicitly this property we just refer to \cite{BBB} for the precise definition. 

It follows from a famous theorem due to Oseledets (see \cite{V2}) that for $\mu$-almost every point $x\in M$ there exist numbers $\lambda _1(x)>\ldots > \lambda _{k}(x)$, and a direct sum decomposition $\mathbb{R}^d=E^{1,A}_{x}\oplus \ldots \oplus E^{k,A}_{x}$ into vector subspaces such that
\begin{displaymath}
A(x)E^{i,A}_{x}=E^{i,A}_{f(x)} \; \textrm{and} \; \lambda _i(x) =\lim _{n\rightarrow \infty} \dfrac{1}{n}\log \parallel A^n(x)v\parallel 
\end{displaymath}
for every non-zero $v\in E^{i,A}_{x}$ and $1\leq i \leq k$. Moreover, since our measure $\mu$ is assumed to be ergodic the \textit{Lyapunov exponents} $\lambda _i(x)$ are constant on a full $\mu$-measure subset of $M$ as well as the dimensions of the \textit{Oseledets subspaces} $E^{i,A}_{x}$. Thus, we will denote by $\lambda ^-(A,\mu)=\lambda _k(x)$ and $\lambda ^+(A,\mu)=\lambda _1(x)$ the \textit{extremal Lyapunov exponents} and by $E^{s,A}_{x}=E^{k,A}_{x}$ and $E^{u,A}_{x}=E^{1,A}_{x}$ the \textit{stable and unstable spaces} respectively. It follows by the Sub-Additive Ergodic Theorem of Kingman (see \cite{K} or \cite{V2}) that the extremal Lyapunov exponents are also given by 
\begin{equation*} 
\lambda ^+(A,\mu)= \lim _{n\rightarrow \infty}\dfrac{1}{n}\log \|A^n(x)\|
\end{equation*}
\vspace{-0.2in}
\begin{flalign}\label{eq: extremal Lyapunov}
& \textrm{and}& 
\end{flalign}
\vspace{-0.2in}
\begin{equation*}
\lambda ^-(A,\mu)= \lim _{n\rightarrow \infty}\dfrac{1}{n}\log \| (A^n(x))^{-1}\|^{-1}
\end{equation*}
for $\mu$ almost every point $x \in M$. The objective of this note is to understand, for a fixed base dynamics $f$, how does the map $A\to E^{i,A}_{x}$ vary in the case when $d=2$, that is, in the case when the cocycle $A$ takes values in $GL(2,\mathbb{R})$.

Let $d$ be the distance on the projective space $\mathbb{P}(\mathbb{R}^2)$ defined by the angle between two directions. We say that an element $A$ of $H^r(M)$ with $\lambda ^+(A,\mu) >\lambda ^-(A,\mu)$ is a \emph{continuity point for the Oseledets decomposition with respect to the measure $\mu$} if the Oseledets subspaces are continuous, in measure, as functions of the cocycle. More precisely, for any sequence $\lbrace (A_k)_{k\in \mathbb{N}}\rbrace \subset H^r(M)$ converging to $A$ in the $r$-H\"older topology and any $\varepsilon >0$, we have
\begin{equation*}
 \mu \Big(  \Big\lbrace x\in M; \; d(E^{u,A_k}_{x}, E^{u,A}_{x})<\varepsilon \quad \textrm{and}\quad d(E^{s,A_k}_{x}, E^{s,A}_{x})<\varepsilon \Big\rbrace \Big) \xrightarrow{k\rightarrow \infty}1.
\end{equation*}

Thus, our main result is the following

\begin{theorem}\label{theorem: continuity of oseledets subspaces}
If $A \in H^r(M)$ is a fiber-bunched cocycle with $\lambda ^+(A,\mu)>\lambda ^-(A,\mu)$ then it is a continuity point for the Oseleteds decomposition with respect to the measure $\mu$.
\end{theorem}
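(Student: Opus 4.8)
The plan is to lift the problem to the projective bundle and to identify certain weak-$*$ limits of measures. For $B\in H^r(M)$ with $\lambda^+(B,\mu)>\lambda^-(B,\mu)$, let $F_B:M\times\mathbb{P}(\mathbb{R}^2)\to M\times\mathbb{P}(\mathbb{R}^2)$, $F_B(x,\xi)=(f(x),B(x)\xi)$, be the associated projective cocycle, and consider the $F_B$-invariant probability measures $m^{u}_{B}=\int_M \delta_{E^{u,B}_x}\,d\mu(x)$ and $m^{s}_{B}=\int_M \delta_{E^{s,B}_x}\,d\mu(x)$, supported on the graphs of the extremal Oseledets sections. Since fiber bunching is an open condition and, by the continuity of Lyapunov exponents in $GL(2,\mathbb{R})$ established in \cite{BBB}, we have $\lambda^{\pm}(A_k,\mu)\to\lambda^{\pm}(A,\mu)$, the splitting $E^{u,A_k}_x\oplus E^{s,A_k}_x$ is defined for all large $k$. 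The core of the argument is to show that $m^{u}_{A_k}\to m^{u}_{A}$ and $m^{s}_{A_k}\to m^{s}_{A}$ in the weak-$*$ topology; the theorem will then follow from a general principle converting weak-$*$ convergence of graph measures towards a graph into convergence in measure of the corresponding sections.

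To identify the limits I would argue by compactness: any subsequence of $(m^{u}_{A_k})_k$ has a weak-$*$ convergent sub-subsequence with limit $m$, and since $A_k\to A$ uniformly we have $F_{A_k}\to F_A$ uniformly, so $m$ is $F_A$-invariant and projects to $\mu$. Introduce the energy $\Lambda_B(\nu)=\int \log\frac{\|B(x)v\|}{\|v\|}\,d\nu(x,[v])$, which is jointly continuous under weak-$*$ convergence of $\nu$ and uniform convergence of $B$. Because $\Lambda_{A_k}(m^{u}_{A_k})=\lambda^+(A_k,\mu)$, the continuity of exponents yields $\Lambda_A(m)=\lambda^+(A,\mu)$. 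Writing $m=\int m_x\,d\mu(x)$ and using $F_A$-invariance to express $\Lambda_A(m)=\frac{1}{n}\int\log\frac{\|A^n(x)v\|}{\|v\|}\,dm(x,[v])$ for every $n$, then letting $n\to\infty$ and invoking the Oseledets theorem, one obtains $\Lambda_A(m)=\lambda^+(A,\mu)-(\lambda^+(A,\mu)-\lambda^-(A,\mu))\int m_x(\{E^{s,A}_x\})\,d\mu(x)$; as $\lambda^+>\lambda^-$ this forces $m_x(\{E^{s,A}_x\})=0$ for a.e.\ $x$. The analogous computation for the stable limit gives $m_x(\{E^{s,A}_x\})=1$, i.e.\ $m^{s}=m^{s}_A$ directly.

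It then remains to upgrade ``no mass on $E^{s,A}$'' to ``$m=m^{u}_A$''. For this set $\Phi(y)=\int d(\xi,E^{u,A}_y)\,dm_y(\xi)$ and note that invariance gives $\Phi(f^n x)=\int d(A^n(x)\xi,E^{u,A}_{f^n x})\,dm_x(\xi)$. For a.e.\ $x$ and every $\xi\neq E^{s,A}_x$, the Oseledets decomposition together with the subexponential decay of the angle between the extremal subspaces yields $d(A^n(x)\xi,E^{u,A}_{f^n x})\to 0$; since $m_x$ gives no mass to $E^{s,A}_x$, bounded convergence gives $\Phi(f^n x)\to 0$ for a.e.\ $x$. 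On the other hand $\int_M\Phi(f^n x)\,d\mu=\int_M\Phi\,d\mu$ is constant by $f$-invariance of $\mu$, so $\int\Phi\,d\mu=0$ and hence $m_x=\delta_{E^{u,A}_x}$ a.e., i.e.\ $m=m^{u}_A$. As every subsequential limit equals $m^{u}_A$, we conclude $m^{u}_{A_k}\to m^{u}_A$, and likewise $m^{s}_{A_k}\to m^{s}_A$.

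Finally, to pass from weak-$*$ convergence of graph measures to convergence in measure of the sections, I would run a Lusin argument: approximate the measurable section $x\mapsto E^{u,A}_x$ by a continuous section $\widetilde E$ off a set of arbitrarily small measure, test the weak-$*$ convergence against the continuous bounded function $(x,\xi)\mapsto\min(d(\xi,\widetilde E_x),1)$, and let the exceptional measure tend to $0$. This gives $E^{u,A_k}_\cdot\to E^{u,A}_\cdot$ and $E^{s,A_k}_\cdot\to E^{s,A}_\cdot$ in measure, which is exactly the statement that $A$ is a continuity point. I expect the main obstacle to be the identification of the limit measures in the second and third paragraphs: the identity $\Lambda_A(m)=\lambda^+(A,\mu)$ rests essentially on the continuity of Lyapunov exponents from \cite{BBB} (which is where fiber bunching and $d=2$ genuinely enter), and the passage from this energy identity to concentration on the Oseledets direction requires the fiberwise contraction estimate controlling the degeneration of the Oseledets angles.
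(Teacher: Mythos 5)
Your proposal is correct and follows the same overall architecture as the paper: graph measures $m^u_{A_k}$ on the projective bundle, weak-$*$ compactness, identification of every subsequential limit using the continuity of Lyapunov exponents from \cite{BBB} (which is indeed where fiber bunching enters and nowhere else), and a final Lusin argument to convert weak-$*$ convergence of graph measures into convergence in measure of the sections. The one genuine difference is the identification step. The paper invokes its Lemma \ref{lemma:convex combination} --- every $F_A$-invariant measure projecting to $\mu$ is a convex combination $\alpha m^s+\beta m^u$ (with $\alpha,\beta$ constant by ergodicity) --- whose proof is only a one-line sketch, and then the energy identity $\int\varphi_A\,d\eta=\lambda^+(A,\mu)$ forces $\alpha=0$. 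You bypass the classification: the telescoped identity $\Lambda_A(m)=\frac1n\int\log\frac{\|A^n(x)v\|}{\|v\|}\,dm$, with the uniformly bounded integrand and Oseledets giving the pointwise limits, yields $\Lambda_A(m)=\lambda^+-(\lambda^+-\lambda^-)\int m_x(\{E^{s,A}_x\})\,d\mu$, killing the mass on $E^{s,A}$; your $\Phi$-argument (push forward by $F_A^n$, use that for $\xi\neq E^{s,A}_x$ one has $d(A^n(x)\xi,E^{u,A}_{f^nx})\to 0$, and compare with the $f$-invariance of $\int\Phi\,d\mu$) then forces $m_x=\delta_{E^{u,A}_x}$ a.e. This is in effect a complete proof of the special case of Lemma \ref{lemma:convex combination} that the argument actually needs, so your write-up is more self-contained than the paper at precisely the point where the paper is sketchiest; the paper's lemma buys a cleaner statement and the full classification, which is not required. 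Your observation that the stable case closes directly from the energy identity alone (forcing $\int m_x(\{E^{s,A}_x\})\,d\mu=1$) is also correct and slightly slicker than invoking symmetry. One small repair in the last step: you cannot in general extend a continuous \emph{section} $g|_K:K\to\mathbb{P}(\mathbb{R}^2)\cong S^1$ continuously to all of $M$ (there can be a topological obstruction); instead extend the real-valued test function $(x,\xi)\mapsto\min(d(\xi,g(x)),1)$ from the compact set $K\times\mathbb{P}(\mathbb{R}^2)$ by Tietze, or use the paper's device of an open neighborhood of the graph of $g|_K$ together with the portmanteau inequality $\liminf_k m_k(U)\geq m^u(U)$ --- either way the Chebyshev-type estimate you indicate goes through unchanged.
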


The hypotheses that $A$ is fiber-bunched and $\mu$ has local product structure are only used to apply the results about continuity of Lyapunov exponents from \cite{BBB}. Thus, more generally, if we have a sequence $\lbrace (A_k)_{k\in \mathbb{N}}\rbrace \subset H^r(M)$ \textit{converging uniformly with holonomies} to $A$ as in the main theorem of  \cite{BBB}, then 
\begin{equation*}
 \mu \Big(  \Big\lbrace x\in M; \; d(E^{u,A_k}_{x}, E^{u,A}_{x})<\varepsilon \quad \textrm{and}\quad d(E^{s,A_k}_{x}, E^{s,A}_{x})<\varepsilon \Big\rbrace \Big) \xrightarrow{k\rightarrow \infty}1.
\end{equation*}
Consequently, our result also applies if we restrict ourselves to the space of locally constant cocycles endowed with the uniform topology.

\section{Proof of the theorem}

Let us consider the \textit{projective cocycle} $F_{A}:M\times \mathbb{P}(\mathbb{R}^2)\to M\times \mathbb{P}(\mathbb{R}^2)$ associated to $A$ and $f$ which is given by 
\begin{displaymath}
F_{A} (x,v)=(f(x),\P A(x)v)
\end{displaymath}
where $\P A$ denotes the action of $A$ on the projective space. We say that an $F_A$-invariant measure $m$ on $M\times \mathbb{P}(\mathbb{R}^2)$ \textit{projects} to $\mu$ if $\pi _{\ast}m=\mu$ where $\pi :M\times \mathbb{P}(\mathbb{R}^2) \to M$ is the canonical projection on the first coordinate. Given a non-zero element $v\in \mathbb{R}^2$ we are going to use the same notation to denote its equivalence class in $\P(\mathbb{R}^2)$.

Let $\mathbb{R}^2=E^{s,A}_{x}\oplus E^{u,A}_{x}$ be the Oseledets decomposition associated to $A$ at the point $x\in M$. Consider also 

\begin{displaymath}
m^s=\int _{M}\delta _{(x,E^{s,A}_{x})} d\mu(x)
\end{displaymath}
and
\begin{displaymath}
m^u=\int _{M}\delta _{(x,E^{u,A}_{x})} d\mu(x)
\end{displaymath}
which are $F_A$-invariant probability measures on $M\times \mathbb{P}(\mathbb{R}^2)$ projecting to $\mu$. Moreover, by the Birkhoff ergodic theorem and \eqref{eq: extremal Lyapunov} we have that

\begin{displaymath}
\lambda ^-(A,\mu) =\int _{M\times \mathbb{P}(\mathbb{R}^2)} \varphi _{A}(x,v) dm^s (x, v)
\end{displaymath}
and
\begin{displaymath}
\lambda ^+(A,\mu) =\int _{M \times \mathbb{P}(\mathbb{R}^2)} \varphi _{A}(x,v) dm^u (x,v)
\end{displaymath}
where $\varphi _{A}: M\times \mathbb{P}(\mathbb{R}^2)\rightarrow \mathbb{R}$ is given by
\begin{equation*}
\varphi _{A} (x,v)= \log \dfrac{\parallel A(x)v\parallel}{\parallel v\parallel}.
\end{equation*}

By the (non-uniform) hyperbolicity of $(A,\mu)$ we have the following.

\begin{lemma}\label{lemma:convex combination}
Let $m$ be a probability measure on $M\times \mathbb{P}(\mathbb{R}^2)$ that projects down to $\mu$. Then, $m$ is $F_{A}$-invariant if and only if it is a convex combination of $m^s$ and $m^u$ for some $f$-invariant functions $\alpha ,\beta :M\to [0,1]$ such that $\alpha(x)+\beta (x)=1$ for every $x\in M$.
\end{lemma}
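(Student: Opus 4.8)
The plan is to prove both implications, with the forward (\emph{if}) direction being a direct computation and the reverse (\emph{only if}) direction carrying the real content.

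For the \emph{if} direction, suppose $m=\int_M(\alpha(x)\delta_{(x,E^{s,A}_x)}+\beta(x)\delta_{(x,E^{u,A}_x)})\,d\mu(x)$ with $\alpha,\beta$ being $f$-invariant and $\alpha+\beta\equiv1$. I would simply push $m$ forward by $F_A$, using the invariance relations $A(x)E^{s,A}_x=E^{s,A}_{f(x)}$ and $A(x)E^{u,A}_x=E^{u,A}_{f(x)}$ together with the $f$-invariance of $\mu$, $\alpha$ and $\beta$, and check that $(F_A)_\ast m=m$ after the change of variables $y=f(x)$. This is routine.

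The substance lies in the \emph{only if} direction. Given an $F_A$-invariant $m$ projecting to $\mu$, I would disintegrate $m=\int_M m_x\,d\mu(x)$ into conditional probabilities $m_x$ on the fibers $\{x\}\times\mathbb{P}(\mathbb{R}^2)$. Since $F_A$ sends the fiber over $x$ onto the fiber over $f(x)$ by $v\mapsto\mathbb{P}A(x)v$, the invariance of $m$ together with essential uniqueness of the disintegration yields $(\mathbb{P}A(x))_\ast m_x=m_{f(x)}$ for $\mu$-almost every $x$. Because $A(x)$ is invertible and maps $E^{s,A}_x$ (resp. $E^{u,A}_x$) bijectively onto $E^{s,A}_{f(x)}$ (resp. $E^{u,A}_{f(x)}$), evaluating this identity on the singletons $\{E^{s,A}_{f(x)}\}$ and $\{E^{u,A}_{f(x)}\}$ shows that $\alpha(x):=m_x(\{E^{s,A}_x\})$ and $\beta(x):=m_x(\{E^{u,A}_x\})$ are $f$-invariant. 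Consequently the residual measure $\nu_x:=m_x-\alpha(x)\delta_{E^{s,A}_x}-\beta(x)\delta_{E^{u,A}_x}$ is nonnegative, charges neither $E^{s,A}_x$ nor $E^{u,A}_x$, satisfies $(\mathbb{P}A(x))_\ast\nu_x=\nu_{f(x)}$, and has total mass $c(x):=1-\alpha(x)-\beta(x)$, which is $f$-invariant and hence $\mu$-a.e. equal to a constant $c$ by ergodicity of $\mu$. It remains to show $c=0$.

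The key step, and the main obstacle, is to rule out $c>0$. Assuming $c>0$, normalization produces an $F_A$-invariant probability measure $\bar m:=c^{-1}\int_M\nu_x\,d\mu(x)$ that projects to $\mu$ and assigns zero mass to both measurable sections $x\mapsto E^{s,A}_x$ and $x\mapsto E^{u,A}_x$. I would then compute $\int\varphi_A\,d\bar m$ in two ways. Applying Birkhoff to $\varphi_A$ over $(F_A,\bar m)$, the Birkhoff sums telescope to $\tfrac1n\log(\|A^n(x)v\|/\|v\|)$; since $\bar m$-almost every $(x,v)$ has $v\notin E^{s,A}_x$, every such vector grows at the top rate, so this limit equals $\lambda^+(A,\mu)$ and thus $\int\varphi_A\,d\bar m=\lambda^+(A,\mu)$. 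Applying Birkhoff instead to $F_A^{-1}$, the backward sums telescope to $-\tfrac1n\log(\|A^{-n}(x)v\|/\|v\|)$; since $\bar m$-almost every $(x,v)$ has $v\notin E^{u,A}_x$, the backward growth rate is $-\lambda^-(A,\mu)$, so the same integral equals $\lambda^-(A,\mu)$. As the forward and backward Birkhoff averages converge $\bar m$-a.e. to the same limit, the conditional expectation of $\varphi_A$ over the common invariant $\sigma$-algebra, these two values coincide, forcing $\lambda^+(A,\mu)=\lambda^-(A,\mu)$ and contradicting the hypothesis. Hence $c=0$, so $\nu_x=0$ for $\mu$-a.e. $x$ and $m_x=\alpha(x)\delta_{E^{s,A}_x}+\beta(x)\delta_{E^{u,A}_x}$, which is exactly the asserted convex combination. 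The delicate points to handle carefully will be the measurability and essential uniqueness underlying the disintegration and the relation $(\mathbb{P}A(x))_\ast m_x=m_{f(x)}$, and the justification that vectors off $E^{s,A}_x$ (resp. off $E^{u,A}_x$) realize the extremal forward (resp. backward) Lyapunov exponents.
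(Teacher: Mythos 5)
Your proof is correct, but it takes a genuinely different route from the paper's. The paper dispatches the converse in a single line: every compact subset of $\mathbb{P}(\mathbb{R}^2)$ disjoint from $\lbrace E^s, E^u\rbrace$ accumulates on $E^u$ under forward iteration and on $E^s$ under backward iteration, so invariance forbids any fiberwise mass away from the two measurable sections --- a purely geometric attractor--repeller argument in the projective fibers. You instead run the ergodic-theoretic version: disintegrate $m$ over $\mu$, peel off the atoms at $E^{s,A}_x$ and $E^{u,A}_x$ (whose masses are $f$-invariant by equivariance of the conditionals), and kill the residual mass $c>0$ by computing $\int \varphi_A\, d\bar m$ twice, with forward Birkhoff telescoping giving $\lambda^+(A,\mu)$ because $\bar m$-a.e.\ $v\notin E^{s,A}_x$, and backward telescoping giving $\lambda^-(A,\mu)$ because $\bar m$-a.e.\ $v\notin E^{u,A}_x$, contradicting $\lambda^+(A,\mu)>\lambda^-(A,\mu)$. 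Both mechanisms rest on the same non-uniform hyperbolicity of $(A,\mu)$, and both implicitly need the Rokhlin disintegration you make explicit; the trade-off is that your argument is quantitative, self-contained modulo standard Oseledets regularity (vectors off $E^{s,A}_x$ grow at rate $\lambda^+$, vectors off $E^{u,A}_x$ grow backwards at rate $-\lambda^-$, both valid since the angle between the Oseledets subspaces decays subexponentially), and mirrors the exponent-versus-integral comparison the paper itself uses later in the proof of the theorem, whereas the paper's sketch exploits the rank-one attractor/repeller structure of $\mathbb{P}(\mathbb{R}^2)$ and would still need a disintegration or recurrence step to upgrade the topological accumulation into the a.e.\ identity $m_x=\alpha(x)\delta_{E^{s,A}_x}+\beta(x)\delta_{E^{u,A}_x}$. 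One cosmetic point: your construction yields $\alpha(x)+\beta(x)=1$ and $f$-invariance only $\mu$-a.e., while the statement asks for every $x$; redefining $\alpha,\beta$ on an invariant null set fixes this, and since $\mu$ is ergodic they are in fact a.e.\ constant.
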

\begin{proof}
One implication is trivial. For the converse one only has to note that every compact subset of $\mathbb{P}(\mathbb{R}^2)$ disjoint from $\lbrace E^u, E^s\rbrace$ accumulates on $E^u$ in the future and on $E^s$ in the past.
\end{proof}

\begin{proof}[Proof of Theorem \ref{theorem: continuity of oseledets subspaces}]

Suppose that $A$ is a fiber-bunched cocycle such that $\lambda ^+(A,\mu)>\lambda ^-(A,\mu)$. As the subset of fiber-bunched elements of $H^r(M)$ is open we may assume without loss of generality that $A_k$ is fiber-bunched for every $k\in \mathbb{N}$. Moreover, since the Lyapunov exponents depend continuously on the cocycle $A$ (see Theorem 1.1 from \cite{BBB}) and $\lambda ^+(A,\mu)>\lambda ^-(A,\mu)$ we may also assume that $\lambda ^+(A_k,\mu)>\lambda ^-(A_k,\mu)$ for every $k\in \mathbb{N}$. We will prove just the assertion about the unstable spaces, that is, that $\mu \left( \left\lbrace x\in M; \; d(E^{u,A_k}_{x}, E^{u,A}_{x})<\delta \right\rbrace \right) \xrightarrow{k\rightarrow \infty}1$. The case of the stable spaces is analogous.

For each $k\in \mathbb{N}$, let us consider the measure
\begin{displaymath}
m_k=\int _{M} \delta _{(x,E^{u,A_k}_{x})} d\mu(x)
\end{displaymath}
and let $m^u$ be the measure given by
\begin{displaymath}
m^u=\int _{M} \delta _{(x,E^{u,A}_{x})} d\mu(x).
\end{displaymath}

These are $F_{A_k}$ and $F_A$-invariant probability measures on $M\times \P(\mathbb{R}^2)$ respectively, projecting to $\mu$. Moreover, $m_k\xrightarrow{k\rightarrow \infty} m^u$. Indeed, let $(m_{k_j})_{j\in \mathbb{N}}$ be a convergent subsequence of $(m_{k})_{k\in \mathbb{N}}$ and suppose that it converges to $\eta$. Since for each $j\in \mathbb{N}$ the measure $m_{k_j}$ is $F_{A_{k_j}}$-invariant and projects to $\mu$ it follows that $\eta$ is an $F_{A}$-invariant measure projecting to $\mu$. Moreover, since  
\begin{displaymath}
\lambda ^+(A_{k_j},\mu) \xrightarrow{j\rightarrow \infty}\lambda ^+(A, \mu)
\end{displaymath}
once the Lyapunov exponents are continuous as functions of the cocycle (see \cite{BBB}) and
\begin{displaymath}
\lambda ^+(A_{k_j},\mu)=\int _{M\times \mathbb{P}(\mathbb{R}^2)} \varphi _{A_{k_j}}dm_{k_j} \xrightarrow{j\rightarrow \infty} \int _{M \times \mathbb{P}(\mathbb{R}^2)} \varphi _{A}d\eta
\end{displaymath}
we get that
\begin{displaymath}
\lambda ^+(A, \mu) = \int _{M \times \mathbb{P}(\mathbb{R}^2)} \varphi _{A}d\eta.
\end{displaymath}
Thus, invoking Lemma \ref{lemma:convex combination} and using the fact that $\mu$ is ergodic it follows that $\eta=m^u$. Indeed, otherwise we would have $\eta=\alpha m^s + \beta m^u$ with $\alpha >0$ and consequently 
$$\int _{M \times \mathbb{P}(\mathbb{R}^2)} \varphi _{A}d\eta= \alpha \lambda ^-(A, \mu) + \beta \lambda ^+(A, \mu) <\lambda ^+(A, \mu).$$
Therefore, $m_k\xrightarrow{k\rightarrow \infty} m^u$ as claimed.

Let $g:M \rightarrow \mathbb{P}(\mathbb{R}^2)$ be the measurable map given by 
\begin{displaymath}
g(x)=E^{u, A}_{x}. 
\end{displaymath} 
Note that its graph has full $m^u$-measure. By Lusin's Theorem, given $\varepsilon >0$ there exists a compact set $K\subset M$ such that the restriction $g_K$ of $g$ to $K$ is continuous and $\mu (K)>1-\varepsilon$. Now, given $\delta >0$, let $U\subset M\times \mathbb{P}(\mathbb{R}^2)$ be an open neighborhood of the graph of $g_K$ such that
\begin{displaymath}
U\cap (K\times \mathbb{P}(\mathbb{R}^2))\subset U_{\delta}
\end{displaymath}
where
\begin{displaymath}
U_{\delta}:=\lbrace (x,v)\in K\times \mathbb{P}(\mathbb{R}^2); \; d(v, g(x))<\delta \rbrace .
\end{displaymath}
By the choice of the measures $m_k$,
\begin{equation}\label{eq: auxiliary eq 1 theo 2}
m_k(U_{\delta})= \mu (\lbrace x\in K; \; d(E^{u,A_k}_{x}, E^{u,A}_{x})<\delta \rbrace ).
\end{equation}
Now, as $m_k\xrightarrow{k\rightarrow \infty} m^u$ it follows that $\liminf m_k(U)\geq m^u(U)> 1-\varepsilon$. On the other hand, as $m_k(K\times \mathbb{P}(\mathbb{R}^2))=\mu(K)> 1-\varepsilon$ for every $k\in \mathbb{N}$, it follows that 
\begin{equation}\label{eq: auxiliary eq 2 theo 2}
m_k(U_{\delta})\geq m_k(U \cap (K\times \mathbb{P}(\mathbb{R}^2)))\geq 1-2\varepsilon
\end{equation}
for every $k$ large enough. Thus, combining \eqref{eq: auxiliary eq 1 theo 2} and \eqref{eq: auxiliary eq 2 theo 2}, we get that $\mu(\lbrace x\in M; \; d(E^{u,A_k}_{x}, E^{u,A}_{x})<\delta \rbrace)\geq 1-2\varepsilon$ for every $k$ large enough completing the proof of Theorem \ref{theorem: continuity of oseledets subspaces}. 

\end{proof}

\end{document}